\mathchardef\gt="313E  
\mathchardef\lt="313C  
\theoremstyle{definition}
\newtheorem{theorem}{Theorem}[section]
\newtheorem{proposition}[theorem]{Proposition}
\newtheorem{definition}[theorem]{Definition}
\newtheorem{remark}[theorem]{Remark}
\newtheorem{notation}[theorem]{Notation}
\DeclareMathAlphabet{\mathpzc}{OT1}{pzc}{m}{it}
\def\span#1#2#3#4#5{\ensuremath{\xymatrix{#1&#3\ar[r]^{#5}\ar[l]_{#4}&#2}}}
\title{\textbf{Concrete fibrations}}
\author{Ruggero Pagnan\\
DISI, University of Genova\\
\texttt{ruggero.pagnan@disi.unige.it}}
\date{}
\begin{document}
\maketitle
\begin{abstract}
As far as we know, no notion of concreteness for fibrations exists. 
We introduce such a notion and discuss some basic results about it.
\end{abstract}
\def\a{\ensuremath{(a)}}
\def\adj#1#2{\ensuremath{\xymatrix@C.2cm{#1\ar@{-|}[r]&#2}}}
%
\def\adjpair#1#2#3#4#5{\ensuremath{\xymatrix{#1\ar@/^/[r]^{#3}_{\hole}="1"&#2\ar@/^/[l]^{#4}_{\hole}="2" \ar@{} "2";"1"|(.3){#5}}}}
%
\def\and{\ensuremath{\wedge}}
%
\def\arr{\ensuremath{\rightarrow}}
%
\def\Arr{\ensuremath{\Rightarrow}}
%
%
%
\def\arrow#1{\ensuremath{\cateb{#1}^{\arr}}}
%
\def\as{\ensuremath{\ast}}
%
%
\def\b{\ensuremath{\bullet}}
%
\def\BC{\ensuremath{(BC)}}
%
\def\BCd{\ensuremath{(BC)_d}}
\def\beps{\ensuremath{\backepsilon}}
%
\def\bicat#1{\ensuremath{\mathcal{#1}}}
%
\def\bf#1{\ensuremath{\mathbf{#1}}}
%
\def\bpararrow#1#2#3#4{\ensuremath{\xymatrix{#1\ar@<.9ex>[rr]^{#3}\ar@<-.9ex>[rr]_{#4}&&#2}}}
%
\def\bsquare#1#2#3#4#5#6#7#8{\xymatrix{#1\ar[dd]_{#8}\ar[rr]^#5&&#2\ar[dd]^{#6}\\
\\
#3\ar[rr]_{#7}&&#4}}
\def\bu{\ensuremath{\bullet}}
\def\btoind#1{#1\index{#1@\textbf{#1}}}
%
\def\card#1{\ensuremath{|#1|}}
\def\cate#1{\ensuremath{\mathcal{#1}}}
\def\cateb#1{\ensuremath{\mathbbm{#1}}}
%
\def\catebf#1{\ensuremath{\mathbf{#1}}}
\def\catepz#1{\ensuremath{\mathpzc{#1}}}
%
\def\cateop#1{\ensuremath{\cate#1^{op}}}
\def\comparrow#1#2#3#4#5#6{\ensuremath{\xymatrix{#6#1\ar[r]#4&#2\ar[r]#5&#3}}}
\def\cons#1{\ensuremath{\newdir{|>}{%
!/4.5pt/@{|}}\xymatrix@1@C=.3cm{\ar@{|=}[r]_{#1}&}}}
%
\def\cslice#1#2{\ensuremath{{\textstyle#1}/#2}}
\def\D{\ensuremath{(D)}}
\def\enunciato#1#2#3#4#5#6#7{\newtheorem{#4}#5{#1}#6                                              
\begin{#4}#2\label{#7}
#3
\end{#4}}
%
\def\dfn#1#2#3#4#5#6#7{\newtheorem{#4}#5{#1}#6                                              
\begin{#4}#2\label{#7}
\emph{#3}
\end{#4}}
%
\def\epiarrow#1#2#3#4{\ensuremath{\newdir{|>}{%
!/4.5pt/@{|}*:(1,-.2)@^{>}*:(1,+.2)@_{>}}\xymatrix{#3#1\ar@{-|>}[r]#4&#2}}}
\def\epitip{\ensuremath{\newdir{|>}{%
!/4.5pt/@{|}*:(1,-.2)@^{>}*:(1,+.2)@_{>}}}}
\def\eps#1{\ensuremath{\epsilon#1}}
\def\esempio#1#2#3#4#5#6#7{\newtheorem{#4}#5{#1}#6                                              
\begin{#4}#2\label{#7}
\textup{#3}
\begin{flushright}
$\blacksquare$
\end{flushright}
\end{#4}}
%
\def\et#1{\ensuremath{\eta#1}}
\def\Ex#1{\mbox{\boldmath\ensuremath{\exists}}{#1}}
\def\ex#1{\ensuremath{\exists#1}}
\def\F{\ensuremath{(F)}}
\def\fam#1{\ensuremath{\textup{Fam}(#1)}}
\def\farrow#1#2#3{\ensuremath{\xymatrix{#3:#1\ar[r]&#2}}}
\def\Fi{\ensuremath{\Phi}}
%
\def\fib#1#2#3{\ensuremath{
#3:\cateb{#1}\arr\cateb{#2}}}
%
%
\def\fibs#1#2#3#4{\ensuremath{\begin{array}{ll}
\slice{\catebf{#1}}{#2}\\
\,\downarrow^{\catebf{#4}}\\
\catebf{#3}
\end{array}}}
\def\fibss#1#2#3#4#5{\ensuremath{\begin{array}{ll}
\slice{\catebf{#1}}{#2}\\
\,\downarrow^{#5}\\
\slice{\catebf{#3}}{#4}
\end{array}}}
\def\forev#1{\ensuremath{\forall~#1,~\forall~}}
\def\fract#1{\ensuremath{\cateb{#1}[\Sigma^{-1}]}}
%
\def\G#1{\ensuremath{\int{#1}}}
%
%
\def\harrow{\ensuremath{\rightharpoonup}}
%
\def\hook{\ensuremath{\hookrightarrow}}
\def\hset#1#2#3{\ensuremath{\cate{#1}(#2,#3)}} 
\def\Im#1{\ensuremath{\mathit{Im}#1}}
\def\implies{\ensuremath{\Rightarrow}}
\def\indprod#1#2{\ensuremath{\prod#1#2}}
%
\def\infer#1#2{\ensuremath{\left.\begin{array}{cc}
#1\\
\xymatrix{\ar@{-}@<-.3ex>[rrrr]\ar@{-}@<.3ex>[rrrr]&&&&}\\
#2\\
\end{array}\right.}}
%
%
\def\it#1{\ensuremath{\mathit{#1}}}
%
\def\l#1{\ensuremath{#1_{\bot}}}
\def\larrow{\ensuremath{\leftarrow}}
\def\lcomparrow#1#2#3#4#5{\ensuremath{\xymatrix{#1\ar[rr]#4&&#2\ar[rr]#5&&#3}}}
\def\lfract#1{\ensuremath{[\Sigma^{-1}]\cateb{#1}}}
%
\def\lhookarrow#1#2#3{\ensuremath{\xymatrix{#1\ar@{^{(}->}[rr]^{#3}&&#2}}}
\def\mod#1#2{\ensuremath{\cateb{Mod}(#1,#2)}}
\def\Mono#1{\ensuremath{\mathit{Mono}#1}}
\def\monoarrow#1#2#3#4{\ensuremath{\newdir{
>}{{}*!/-5pt/@{>}}\xymatrix{#3#1\ar@{ >->}[r]#4&#2}}}
\def\monotail{\ensuremath{\newdir{
>}{{}*!/-5pt/@{>}}}}
\def\N{\ensuremath{(N)}}
\def\natarrow#1#2#3{\ensuremath{\newdir{|>}{%
!/4.5pt/@{|}*:(1,-.2)@^{>}*:(1,+.2)@_{>}}\xymatrix{#3:#1\ar@{=|>}[r]&#2}}}
\def\natbody{\ensuremath{\newdir{|>}{%
!/4.5pt/@{|}*:(1,-.2)@^{>}*:(1,+.2)@_{>}}}}
\makeatletter
\def\natpararrow#1#2#3#4#5{\@ifnextchar(
 {\Natpararrow{#1}{#2}{#3}{#4}{#5}}
 {\Natpararrow{#1}{#2}{#3}{#4}{#5}(.5)}}
\makeatother
\def\Natpararrow#1#2#3#4#5(#6){\ensuremath{\newdir{|>}{%
!/4.5pt/@{|}*:(1,-.2)@^{>}*:(1,+.2)@_{>}}
\xymatrix{#1\ar@<1.5ex>[rr]^(#6){#3}|(#6){\vrule height0pt depth.7ex
width0pt}="a"\ar@<-1.5ex>[rr]_(#6){#4}|(#6){\vrule height1.5ex width0pt}="b"&&#2
\ar@{=|>} "a";"b"#5}}}
\def\om{\ensuremath{\omega}}
%
\def\ov#1{\ensuremath{\overline{#1}}}
%
\def\ovarr#1{\ensuremath{\overrightarrow{#1}}}
%
\def\pararrow#1#2#3#4{\ensuremath{\xymatrix{#1\ar@<.8ex>[r]^{#3}\ar@<-.8ex>[r]_{#4}&#2}}}
\def\p#1{\ensuremath{\mathds{P}_{\catepz{S}}}(#1)}
\def\P#1{\ensuremath{\cateb{P}#1}}
\def\pr#1{\textbf{Proof:~}#1 
\begin{flushright}
$\Box$
\end{flushright}
}
%
%
%
%
\def\rel#1#2#3{\ensuremath{\xymatrix{#1:#2\ar[r]
|-{\SelectTips{cm}{}\object@{|}} &#3}}}
%
%
\def\res#1#2{\ensuremath{#1_{\rbag #2}}}
\def\S#1{\ensuremath{\catepz{S}_{\cateb{#1}}}}
\def\sectoind#1#2{#1\index{#2!#1}}
%
\def\sinfer#1#2{\ensuremath{\left.\begin{array}{cc}
#1\\
\xymatrix{\ar@{-}[rrrr]&&&&}\\
#2\\
\end{array}\right.}}
%
\def\slice#1#2{\ensuremath{#1/{\textstyle#2}}}
\def\sslice#1#2{\ensuremath{#1//{\textstyle#2}}}
\def\square#1#2#3#4#5#6#7#8{$$\xymatrix{#1\ar[d]_{#8}\ar[r]^{#5}&#2\ar[d]^{#6}\\
#3\ar[r]_{#7}&#4}$$}
\def\sub#1#2#3{\ensuremath{#1(#2#3)~}}
\def\summ#1{\ensuremath{\sum#1}}
%
\def\ti#1{\ensuremath{\tilde#1}}
%
\def\tens{\ensuremath{\otimes}}
%
\def\teo#1{\ensuremath{\tau#1}}
\def\ter#1{\ensuremath{\tau_{\cate{#1}}}}
\def\teta{\ensuremath{\theta}}
%
%
\def\toind#1{#1\index{#1}}
%
\def\un#1{\underline{#1}}
\def\vcell{\ensuremath{\ar@<-1ex>@{}[r]_{\hole}="a"\ar@<+1ex>@{}[r]^{\hole}="b"}}
%
\def\veps{\ensuremath{\varepsilon}}
%
\def\vfi{\ensuremath{\varphi}}
\def\w{\ensuremath{\wedge}}
\def\wti#1{\ensuremath{\widetilde#1}}
\def\y#1{\ensuremath{y#1}}

\section{Introduction}
A concrete category is one with a faithful functor to the category of sets and functions, in which case is also more specifically referred to as a construct. The idea is that a concrete category has to be thought of as to a catgory of sets equipped with an unspecified structure and homomorphisms between them.\\
As far as we know no notion of concreteness for fibrations exist. The main aim of this paper is that of discussing the topic in adherence with the point of view pursued in~\cite{MR780520}, together with some basic results about it.\\
In doing this for concreteness, the starting point was the recognition that the Isbell condition, definition~\ref{Isb}, is, in fibrational terms, nothing but a condition 
expressing a precise representability relationship between the whole fibration and its base, as it is typical for all the ``smallness'' conditions for fibrations. Think of local smallness or comprehension, for example. We adopted this approch with the intention of getting rid of any set-theoretic definition or characterization of concreteness. 
Coherently, one is prepared to find out that theorem~\ref{teo} no longer holds in general for fibrations, whereas a suitable version of it must be expected to hold, see theorem~\ref{daqui}. We end by discussing concreteness of small fibrations in some detail.

\section{Preliminaries}\label{prelim}
We assume that the reader is already acquainted with
ordinary category theory. In any case, she may
consult~\cite{MR1712872} for example, whereas
for further details concerning the content of the present section
she may consult~\cite{MR0156884},~\cite{MR0322006},~\cite{MR735079},~\cite{MR2240597},~\cite{MR1079899}.
Throughout the paper the metatheoretic framework is a theory of sets
and classes with the axiom of choice for classes.\\

Following~\cite{MR2240597}, with personal notation, we give the following
\begin{definition}
Let \cateb{B} be a category. A \emph{concrete category} over \cateb{B} is a pair 
$(\cateb{C},U)$ where \cateb{C} is a category and $U:\cateb{C}\arr\cateb{B}$ is a faithful functor. A concrete category over the category of sets and functions, \cateb{Sets}, will be henceforth referred to as a \emph{construct}. We say that a category \cateb{C} is concrete over \cateb{B}, or a construct, to implicitly mean that it is equipped with a faithful functor from it to \cateb{B}, or \cateb{Sets}, respectively.
\end{definition}
For every objects $A$, $B$ in a category \cateb{C}, a \emph{span} 
from $A$ to $B$ or $(A,B)$-\emph{span}, is a diagram $\span{A}{B}{X}{f}{g}$, whereas an
$(A,B)$-\emph{cospan} is an $(A,B)$-span in $\cateb{C}^{op}$. When no
confusion is likely to arise
spans and cospans will be also briefly written as triplets
$(f,X,g)$ and in both cases we will refer to $f$ and $g$ as to their \emph{components}.
Two $(A,B)$-spans $(f,X,g)$, $(f',X',g')$ are said to be \emph{equivalent} if for every
$(A,B)$-cospan $(h,Z,k)$, $hf=kg$ if and only if $hf'=kg'$, which fact
will be also more briefly indicated $(f,X,g)\sim(f',X',g')$.
\begin{definition}\label{Isb}
A category satisfies the \emph{Isbell condition} if for every
objects $A$ and $B$, there exists a set $\Sigma_{A,B}$ of $(A,B)$-spans no two different of which are equivalent and such that each $(A,B)$-span is equivalent to one
element in $\Sigma_{A,B}$. Henceforth, $\Sigma_{A,B}$ will be also
referred to as a \emph{choice set}.
\end{definition}
\begin{theorem}\label{teo}
A category \cateb{C} is a construct if and only if it satisfies the
Isbell condition.
\end{theorem}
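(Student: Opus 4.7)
The result splits into two implications of rather different character, which I treat in turn. For the forward direction, suppose $U:\cateb{C}\to\cateb{Sets}$ is faithful. To each $(A,B)$-span $\sigma=(f,X,g)$ I attach the image relation $I(\sigma)=\{(Uf(x),Ug(x)):x\in UX\}\subseteq UA\times UB$. Faithfulness of $U$ reduces the cospan-test defining $\sim$ to a condition on $I(\sigma)$ alone: for any $(A,B)$-cospan $(h,Z,k)$, $hf=kg$ iff $Uh\circ Uf=Uk\circ Ug$, iff $Uh(a)=Uk(b)$ for every $(a,b)\in I(\sigma)$. Hence any two spans sharing the same image relation are equivalent, so $[\sigma]\mapsto I(\sigma)$ is a well-defined injection of $\sim$-classes into the set $\mathcal{P}(UA\times UB)$; the equivalence classes therefore form a set, and picking one representative per class (using the axiom of choice for classes assumed in the Preliminaries) yields the required $\Sigma_{A,B}$.

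For the reverse direction, assume the Isbell condition with chosen $\Sigma_{A,B}$ and build a faithful $U$ out of this span-data. First, span-equivalence is stable under post-composition at either endpoint, so an arrow $\phi:A\to B$ induces, for every fixed $C$, a map on span-classes $\Sigma_{C,A}/{\sim}\to\Sigma_{C,B}/{\sim}$ sending $[(f,X,g)]$ to $[(f,X,\phi g)]$. Distinct $\phi\neq\psi:A\to B$ are always separated in some such quotient: the cospan $(\phi,B,1_B)$ witnesses $(1_A,A,\phi)\not\sim(1_A,A,\psi)$, so taking $C=A$ and acting on the class of $(1_A,A,1_A)$ already detects the difference. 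The task is then to assemble these partial set-valued functors $\Sigma_{C,-}/{\sim}$ into a single faithful $U$, via a suitable disjoint-union or colimit construction that is kept within \cateb{Sets} by the Isbell bound.

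The forward direction is essentially bookkeeping once the image relation is introduced; the real work lies in the reverse direction. The naive ``diagonal'' attempt $U(A)=\Sigma_{A,A}/{\sim}$ with $U\phi$ applying $\phi$ to both legs already fails --- for a non-commutative group regarded as a one-object category it collapses arrows differing by conjugation --- so one must find a construction rich enough to detect every arrow while remaining small enough to land in \cateb{Sets}. I expect the metatheoretic axiom of choice for classes, assumed in the Preliminaries, to be decisive in extracting a single faithful functor from the class-indexed family of span-class representations.
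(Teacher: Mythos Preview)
The paper does not itself prove this theorem: its proof consists solely of references to the literature, so there is no in-paper argument to compare yours against line by line.

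Your forward direction is essentially right but contains a logical slip. From ``spans with the same image relation are equivalent'' you cannot conclude that $[\sigma]\mapsto I(\sigma)$ is a \emph{well-defined} map on $\sim$-classes; well-definedness would need the converse implication, and that fails in general. (Take \cateb{C} with objects $X,A$, two parallel arrows $f,g:X\to A$, and no non-identity arrows out of $A$: the only $(A,A)$-cospan is $(1_A,A,1_A)$, so $(f,X,f)\sim(g,X,g)$, yet any faithful $U$ forces $Uf\neq Ug$ and hence $I((f,X,f))\neq I((g,X,g))$.) What you have actually established is that each fibre of $I:\mathrm{Span}(A,B)\to\mathcal{P}(UA\times UB)$ lies inside a single $\sim$-class, so the image of $I$, a set, surjects onto the collection of $\sim$-classes. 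That suffices, and the sentence should be rephrased accordingly.

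The reverse direction, however, is not a proof but a programme. You have correctly isolated the span-class functors $\Sigma_{C,-}/{\sim}$, correctly shown that they jointly separate parallel arrows, and correctly warned that the diagonal $\Sigma_{A,A}/{\sim}$ alone is not faithful. But the sentence ``assemble these into a single faithful $U$ via a suitable disjoint-union or colimit kept within \cateb{Sets} by the Isbell bound'' is exactly where the substance of the theorem lies, and you have not carried it out. The obstacle you yourself flag is real: the parameters $C$ range over a proper class, so the naive coproduct $\coprod_C\Sigma_{C,A}/{\sim}$ need not be a set, and nothing you have written explains how the Isbell condition cuts this down. The constructions in the references the paper cites resolve precisely this point by a further, non-obvious argument; without it, what you have is an accurate diagnosis of what must be done rather than a demonstration that it can be.
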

\begin{proof}
See~\cite{MR0322006}, and~\cite{MR0399206} where the proof is carried out 
through the explicit construction of a faithful functor from \cateb{C} to \cateb{Sets}.
\end{proof}

\section{Concrete fibrations}\label{concretefib}
We here discuss concreteness of fibrations. 
Regarding the fibered category theory involved we only
recall the definition of fiber category, cartesian morphism and
fibered category, mostly with the intent of establishing terminology and
notation. We assume basic knowledge in fibered category theory as
well as knowledge of some advaced topics in it.
We refer the reader to~\cite{Grothendieck},~\cite{MR780520},
~\cite{Streicher},~\cite{MR1674451} and~\cite{MR1313497}. 
Nonetheless, some of the relevant notions will be briefly recalled when needed.
\begin{definition}
Let \fib{X}{B}{P} be a functor.
For every object $I$ in \cateb{B}, a morphism in \cateb{X} is said to
be \emph{vertical} over $I$ with respect to $P$ or 
$P$-\emph{vertical} over $I$ if its image under $P$ is the identical morphism at $I$.
The $P$-vertical morphisms at $I$ identify
a category $\cateb{X}_I$ which will be henceforth
referred to as \emph{fiber category} over $I$ with respect to $P$ or
the $P$-\emph{fiber} over $I$.
A morphism $\vfi:X\arr Y$ in \cateb{X} is said to be \emph{cartesian} with
respect to $P$ or $P$-\emph{cartesian} if for every morphisms
$v:J\arr PX$, $g:Z\arr Y$, with $Pg=P\vfi\circ v$, there exists a
unique morphism $\gamma:Z\arr X$ with $\vfi\circ\gamma=g$ and
$P\gamma=v$. The functor $P$ is said to be a \emph{fibration} or a
\emph{fibered category} if for every object $Y$ in \cateb{X} and for
every morphism $u:I\arr PY$, there exists an
$P$-cartesian morphism $\vfi:X \arr Y$ with $P\vfi =u$, to which we
will henceforth refer to as $P$-\emph{cartesian lifting} or
$P$-\emph{reindexing} of $Y$ along $u$. The categories \cateb{X} and \cateb{B}
will be also respectively referred to as \emph{total category} and
\emph{base category} of the fibration $P$.
\end{definition}

We recall that a fibration is said to be \emph{cloven}
if it comes equipped with chosen $P$-reindexings.
If \fib{X}{B}{P} is a cloven  fibration, then
for every object $Y$ in \cateb{X} and morphism $u:I\arr PY$ in \cateb{B}
a chosen $P$-reindexing of $Y$ along $u$ will be usually written
$\ov{u}Y:u^*Y\arr Y$. Fibered functors among cloven fibrations are not
assumed to preserve the chosen reindexings.\\

For every category with pullbacks \cateb{B}, the codomain functor $\catebf{cod}:\arrow{\cateb{B}}\arr\cateb{B}$ is a fibration which is usually referred to as \emph{fundamental fibration} over \cateb{B} since it allows \cateb{B} to be fibered over itself.\\

Following~\cite{MR0393180}, with personal terminology and notation, we give the following
\begin{definition}
A \emph{category with small morphisms} is a pair $(\cateb{B}, \cate{S})$ where \cateb{B} is category with pullbacks and \cate{S} is a class of morphisms of \cateb{B}, referred to as \emph{small}, satisfying the following requirements:
\begin{itemize}
\item[-] every isomorphism is small.
\item[-] small morphisms are closed under composition.
\item[-] small morphisms are stable under pullback along any morphism of \cateb{B}.
\item[-] if $f$ and $f\circ g$, whenerver the compostite makes sense, are small, then $g$ is small.
\end{itemize}
If \cateb{B} has a terminal object, then a \emph{small object} is one whose unique morphism to it is small. We will say that \cateb{B} is a category with small morphisms to implicitly mean that it is a category with pullbacks together with a class of small morphisms. Whenever we won't explicitly or implicitly refer to any notion of smallness in a fixed category, it has to be understood that each morphism is considered as small.
\end{definition}

\begin{remark}
Categories with small morphisms should intuitively be thought of as categories of classes equipped with a suitable notion of smallness allowing a class/set distinction. Small morphisms have to be thought of as morphisms whose fibers are small objects. In the framework or Algebraic Set Theory they are usually required to satisfy a certain amount of axioms to provide theories of sets and classes as flexible as possible. The reader may further consult~\cite{MR1368403}.
\end{remark}

\begin{definition}
Let \cateb{B} be a category with small morphisms.
A \emph{concrete fibration} over \cateb{B} is a pair $(P,U)$ where \fib{X}{B}{P} is a fibration and $U:P\arr\catebf{cod}$ is a faithful fibered functor over \cateb{B} such that for every morphism $f:X\arr Y$  in \cateb{X}, $Uf$ is a commuting square 
\[\xymatrix{|X|\ar[d]_{UX}\ar[r]^{|f|}&|Y|\ar[d]^{UY}\\
PX\ar[r]_{Pf}&PY}\]
in which $UX$ and $UY$ are small morphisms. A concrete fibration over \cateb{Sets} will be henceforth referred to as a \emph{fibered construct}.
\end{definition}
\begin{definition}
Let \fib{X}{B}{P} be a functor and $A$, $B$ be objects in the same
$P$-fiber. An $(A,B)$-span in $P$ is an $(A,B)$-span $(f,X,g)$ in \cateb{X}
with $Pf=Pg$. A vertical $(A,B)$-span in $P$ is an $(A,B)$-span in
$P$ whose components are $P$-vertical morphisms.
An $(A,B)$-cospan in $P$ is an $(A,B)$-span in
the opposite functor $P^{op}$, and
a vertical $(A,B)$-cospan is an $(A,B)$-cospan whose components are
$P^{op}$-vertical morphisms. Two $(A,B)$-spans $(f,X,g)$,
$(f',X',g')$ are said to be $P$-\emph{equivalent} if $Pf=Pf'$
and for every $(A,B)$-cospan $(h,Z,k)$, $hf=kg$ if and only if
$hf'=kg'$, which fact will be also indicated
$(f,X,g)\sim_P(f',X',g')$.
\end{definition}

\begin{definition}\label{concretefibration}

Let \cateb{B} be a category with small morphisms and \fib{X}{B}{P} be a fibration.
$P$ satisfies the \emph{Isbell condition} 
if for every objects $A$, $B$ in the same
$P$-fiber, there exists an $(A,B)$-span $(\pi_A,R,\pi_B)$ in $P$, with $P\pi_A$ a small morphism,
such that for every $(A,B)$-span $(f, X, g)$, there exists a unique $P$-cartesian morphism
$\theta:S\arr R$, such that $(f,X,g)\sim_P(\pi_A\theta,S,\pi_B\theta)$. 
The span $(\pi_A,R,\pi_B)$ will be also referred to as a \emph{choice span} for $P$.
\end{definition}

\begin{proposition}\label{clovconcr}
Let \cateb{B} be a category with small morphisms and \fib{X}{B}{P} be a cloven fibration.
$P$ satisfies the Isbell condition if and only
if for every object $I$ in \cateb{B}, and objects $A$, $B$ in
$\cateb{X}_I$, there exists a small morphism $\pi:\Sigma_{A,B}\arr I$
together with an $P$-vertical $(\pi^*A,\pi^*B)$-span $(p_A,R,p_B)$
such that for every morphism $u:J\arr I$ and $P$-vertical
$(u^*A,u^*B)$-span $(a,X,b)$ there exists a unique morphism $\ov{u}:J\arr\Sigma_{A,B}$ such that
$\pi\circ\ov{u}=u$ and for every morphism $v:I\arr K$ and
$(A,B)$-cospan $(h,Z,k)$ with $Ph=Pk=v$,
the outer part of diagram

\[\xymatrix@C=8ex{&&u^*A\ar[d]^{\ti{h}}\\
  X\ar@/^1pc/[urr]^a\ar@/_1pc/[drr]_b&
\ov{u}^*R\ar@/^/[ur]^(.4){\ti{p}_A}\ar@/_/[dr]_(.4){\ti{p}_B}&u^*v^*Z\\
&&u^*B\ar[u]_{\ti{k}}}\]
commutes over $J$ if and only if its inner part does, where the
$P$-vertical morphisms $\ti{p}_A$, $\ti{p}_B$, $\ti{h}$, $\ti{k}$
have been uniquely obtained as shown in the following
diagrams:
\[\xymatrix{u^*A\ar[r]&\pi^*A\\
\ov{u}^*R\ar@{-->}[u]^{\ti{p}_A}
\ar@{-->}[d]_{\ti{p}_B}\ar[r]^{\ti{u}R}&R\ar[u]_{p_A}\ar[d]^{p_B}\\
u^*B\ar[r]&\pi^*B}
\qquad
\xymatrix{u^*A\ar@{-->}[d]_{\ti{h}}\ar[r]^{\ov{u}A}&A\ar[dr]^h\\
u^*v^*Z\ar[rr]^{\ov{v\circ u}Z}&&Z\\
u^*B\ar@{-->}[u]^{\ti{k}}\ar[r]_{\ov{u}B}&B\ar[ur]_k}\]
where $\ti{u}R$ is a chosen $P$-reindexing of $R$ along \ov{u} and
the upper and lower horizontal morphisms in the lefmost diagram are
uniquely induced and $P$-cartesian over \ov{u}.

\end{proposition}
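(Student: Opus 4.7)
The plan is to exploit the cloven structure by translating each $(A,B)$-span in $P$ into a $P$-vertical span in an appropriate fiber, using the unique factorization of any morphism as a $P$-vertical morphism followed by a chosen cartesian lifting. I would treat the two implications separately.

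For $(\Rightarrow)$, suppose $P$ satisfies the Isbell condition of Definition~\ref{concretefibration} and let $(\pi_A, R, \pi_B)$ be a choice span for $A, B \in \cateb{X}_I$, with $\pi := P\pi_A = P\pi_B$ small. I would set $\Sigma_{A,B} := PR$ and factor $\pi_A, \pi_B$ through the chosen cartesian liftings $\ov{\pi}A : \pi^*A \to A$ and $\ov{\pi}B : \pi^*B \to B$, yielding $P$-vertical morphisms $p_A : R \to \pi^*A$ and $p_B : R \to \pi^*B$ as the required span. Given test data $u : J \to I$ and a $P$-vertical $(u^*A, u^*B)$-span $(a, X, b)$, composition with $\ov{u}A, \ov{u}B$ converts it into an $(A,B)$-span $(\ov{u}A \circ a, X, \ov{u}B \circ b)$ in $P$, so the Isbell condition produces a unique cartesian $\theta : S \to R$ over some $P\theta$ satisfying $\pi \circ P\theta = u$; taking $\ov{u} := P\theta$ gives the required factorization. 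Rewriting the cospan equation defining $\sim_P$ by pushing any cospan $(h, Z, k)$ through $\ov{u}A, \ov{u}B$ (producing the vertical $\ti{h}, \ti{k}$ of the statement) yields the displayed outer-versus-inner biconditional.

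For $(\Leftarrow)$, given the cloven-style data $(\pi, \Sigma_{A,B}, p_A, R, p_B)$, I would set $\pi_A := \ov{\pi}A \circ p_A$ and $\pi_B := \ov{\pi}B \circ p_B$, producing an $(A,B)$-span in $P$ whose $P$-image $\pi$ is small. For any $(A,B)$-span $(f, X, g)$ in $P$ over $u := Pf = Pg$, I would vertically factor $f = \ov{u}A \circ \hat{a}$ and $g = \ov{u}B \circ \hat{b}$ to extract a $P$-vertical $(u^*A, u^*B)$-span $(\hat{a}, X, \hat{b})$, apply the hypothesis to obtain the unique $\ov{u} : PX \to \Sigma_{A,B}$ with $\pi \circ \ov{u} = u$, and take $\theta := \ti{u}R : \ov{u}^*R \to R$ from the cleavage. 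The cartesian $\theta$ is unique because it arises from the chosen cleavage over the unique $\ov{u}$; and the stated inner/outer biconditional directly transcribes the cospan test for $(f, X, g) \sim_P (\pi_A \theta, S, \pi_B \theta)$ once every cospan component is vertically factored through $\ov{u}A, \ov{u}B, \ov{v \circ u}Z$.

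The main obstacle I expect is not conceptual but diagrammatic: one must carefully identify the auxiliary $P$-vertical morphisms $\ti{p}_A, \ti{p}_B, \ti{h}, \ti{k}$ appearing in the statement with the factorizations of $p_A \circ \ti{u}R$, $p_B \circ \ti{u}R$, $h$, $k$ through the cartesian liftings $\ov{u}A, \ov{u}B, \ov{v \circ u}Z$, and then verify that commutativity of the outer triangles exactly encodes the single cospan equality $h \circ \pi_A \theta = k \circ \pi_B \theta$ reindexed over $J$, while that of the inner triangles encodes its vertical counterpart. All of this follows from the universal property of cartesian morphisms together with the uniqueness aspect of the cleavage, but it requires chasing a sizeable diagram that simultaneously relates the liftings of $R$ along $\ov{u}$ and of $A, B$ along $\pi$ and $u$.
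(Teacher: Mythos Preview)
Your approach is correct and is exactly what the paper intends: its own proof is the single word ``Straightforward''. The translation you describe---factoring spans and cospans through the chosen cartesian liftings to pass between Definition~\ref{concretefibration} and the fiberwise formulation---is the only natural route, and your identification of the diagrammatic bookkeeping as the sole obstacle is accurate.
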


\begin{proof}
Straightforward.
\end{proof}

\begin{remark}\label{aquesto}
Referring to definition~\ref{concretefibration}, it is worth 
briefly discussing the case in which the $P$-cartesian morphism $\theta$ is a mediating one, that is when in fact $\theta:X\arr R$, $\pi_A\theta=f$ and $\pi_B\theta=g$, and in turn  the remaining part of the required universal property is automatically verified. 
With respect to a cloven fibration $P$ over a base category with small morphisms \cateb{B} the satisfaction of the Isbell condition in this special case, amounts to the following: for every object $I$ in \cateb{B}, and objects $A$, $B$ over $I$, 
there exists a small morphism $\pi:\Sigma_{A,B}\arr I$
together with an $P$-vertical $(\pi^*A,\pi^*B)$-span $(p_A,R,p_B)$
such that for every morphism $u:J\arr I$ and $P$-vertical
$(u^*A,u^*B)$-span $(a,X,b)$, there exists a unique morphism
$\ov{u}:J\arr\Sigma_{A,B}$ such that $\pi\circ\ov{u}=u$, and
$\ov{u}^*(p_A)\simeq a$, $\ov{u}^
*(p_B)\simeq b$, vertically over $J$.
\end{remark}

\begin{proposition}
Let \fib{X}{B}{P} be a fibration that satisfies the Isbell condition. The following facts hold:
\begin{itemize}
\item[(i)] for every object $X$ in \cateb{X}, the assignment $X\mapsto\Sigma_{X,X}\arr PX$ extends to a fibered funtor $P\arr \catebf{cod}$ over \cateb{B}.
\item[(ii)] if moreover $(\cateb{X},P)$ is a concrete category over \cateb{B}, then the previous assignment extends to a faithful fibered functor. 
\end{itemize}
\end{proposition}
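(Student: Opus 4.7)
The plan for (i) is to build the functor $U: \cateb{X} \to \arrow{\cateb{B}}$ directly from the choice spans provided by the Isbell condition, and to extract functoriality and cartesian preservation from the uniqueness clauses in proposition~\ref{clovconcr}. On objects, for each $X$ I fix a choice span $(\pi_X^1, R_X, \pi_X^2)$ for the pair $(X, X)$ and set $UX := P\pi_X^1: \Sigma_{X,X} \to PX$, which is a small morphism by definition~\ref{concretefibration} and satisfies $\catebf{cod}(UX) = PX$ on the nose. To define $U$ on $f: X \to Y$ with $Pf = u$, I fix a cleavage of $P$ (permitted by the axiom of choice for classes), form the $(Y, Y)$-span $(f\pi_X^1, R_X, f\pi_X^2)$ in $P$, which sits over $u \circ \pi_X$ with $\pi_X := P\pi_X^1$, and verticalize it by factoring each component over $u\pi_X$ into a $P$-cartesian lift followed by a $P$-vertical part. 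Proposition~\ref{clovconcr} applied to the chosen $(Y, Y)$-span then yields a unique morphism $|f|: \Sigma_{X,X} \to \Sigma_{Y,Y}$ with $\pi_Y \circ |f| = u\pi_X$, such that the pullback of the $(Y, Y)$-choice span along $|f|$ is $P$-equivalent to the vertical span just obtained. Setting $Uf$ equal to the resulting commutative square gives $\catebf{cod}(Uf) = Pf$.

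Functoriality and preservation of cartesian morphisms both reduce to the uniqueness clause of proposition~\ref{clovconcr}. For identities, $\mathrm{id}_{\Sigma_{X,X}}$ trivially satisfies the defining property for $\mathrm{id}_X$ and is thus the unique such map. For composition $g \circ f$ with $Pg = v$, both $|gf|$ and $|g| \circ |f|$ enjoy the property that pulling back the $(Z, Z)$-choice span along them is $P$-equivalent to the vertical $(Z, Z)$-span arising from $gf\pi_X^i$ over $vu\pi_X$: this holds for $|gf|$ directly by construction, and for $|g| \circ |f|$ by chaining the pullback representations delivered by $|f|$ and $|g|$ and using stability of $P$-equivalence under reindexing; uniqueness then forces $|gf| = |g| \circ |f|$. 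For cartesian preservation, when $f$ is $P$-cartesian over $u$ the vertical factors of $f\pi_X^i$ over $u\pi_X$ become isomorphisms, exhibiting $(f\pi_X^1, R_X, f\pi_X^2)$ itself as a reindexing of a $(Y, Y)$-choice span along $u$, and uniqueness then forces $Uf$ to be a pullback square in $\cateb{B}$, hence $\catebf{cod}$-cartesian.

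Part (ii) is immediate: by construction $P = \catebf{cod} \circ U$, so any parallel $f, f': X \to Y$ with $Uf = Uf'$ satisfy $Pf = Pf'$, and faithfulness of $P$, which is exactly the hypothesis that $(\cateb{X}, P)$ is concrete over $\cateb{B}$, then forces $f = f'$. The principal obstacle I anticipate lies in the composition axiom for (i): namely verifying that the pullback of the $(Z, Z)$-choice span along $|g| \circ |f|$ really is $P$-equivalent to the vertical span arising from $gf\pi_X^i$. This will require a careful diagram chase tracking how the two pullback representations delivered by $|f|$ and $|g|$ interact under the reindexing along $|f|$, together with the stability of $P$-equivalence under $P$-cartesian reindexing. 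Everything else is routine given the uniqueness in proposition~\ref{clovconcr}.
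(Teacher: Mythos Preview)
The paper's own proof consists of the single word ``Straightforward'', so your proposal is the natural expansion that proof invites, and the overall architecture---define $U$ on objects via choice spans, on morphisms via the uniqueness clause of the Isbell condition, and extract functoriality and cartesian preservation from that same uniqueness---is exactly right. Part (ii) is also correct as you state it: $\catebf{cod}\circ U=P$ on the nose, so faithfulness of $P$ forces faithfulness of $U$.

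There is, however, a concrete slip in your cartesian-preservation step. When $f:X\to Y$ is $P$-cartesian over $u$, the vertical factors of $f\pi_X^i$ over $u\pi_X$ are \emph{not} isomorphisms: under the canonical identification $(u\pi_X)^*Y\cong \pi_X^*X$ coming from cartesianness of $f$, the vertical factor of $f\pi_X^i$ is precisely the vertical component $p_X^i:R_X\to \pi_X^*X$ of the $(X,X)$-choice span itself, which has no reason to be invertible. What is actually true, and what you need, is that the verticalised $(Y,Y)$-span over $\Sigma_{X,X}$ coincides (via that identification) with the $(X,X)$-choice span. The pullback property of the square $Uf$ is then verified directly: a pair $(v:J\to PX,\; w:J\to\Sigma_{Y,Y})$ with $\pi_Y w=uv$ encodes, via the universal property of $\Sigma_{Y,Y}$, a vertical $((uv)^*Y,(uv)^*Y)$-span over $J$; cartesianness of $f$ transports this to a vertical $(v^*X,v^*X)$-span, whence the universal property of $\Sigma_{X,X}$ yields a unique $\bar w:J\to\Sigma_{X,X}$ over $v$, and one checks $|f|\bar w=w$ by uniqueness in $\Sigma_{Y,Y}$. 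This repairs the argument without altering its shape.
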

\begin{proof}
Straightforward.
\end{proof}

\begin{proposition}\label{daqui}
Let \cateb{C} be a category. The following facts are equivalent:
\begin{itemize}
\item[(i)] \cateb{C} is a construct.
\item[(ii)] \cateb{C} satisfies the Isbell condition.
\item[(iii)] $\textbf{proj}:\textup{Fam}(\cateb{C})\arr\cateb{Sets}$ is a fibered construct.
\item[(iv)] $\textbf{proj}:\textup{Fam}(\cateb{C})\arr\cateb{Sets}$ satisfies the Isbell condition.
\end{itemize}
\end{proposition}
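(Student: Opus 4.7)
The plan is to prove $(\text{i}) \Leftrightarrow (\text{ii})$ by citing Theorem~\ref{teo} and then to establish the two genuinely new equivalences $(\text{i}) \Leftrightarrow (\text{iii})$ and $(\text{ii}) \Leftrightarrow (\text{iv})$, exploiting the fact that the family fibration transports pointwise data on $\cateb{C}$ into global fibered data over $\cateb{Sets}$.

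For $(\text{i}) \Rightarrow (\text{iii})$, fix a faithful $V: \cateb{C} \arr \cateb{Sets}$ and define a functor $U: \textup{Fam}(\cateb{C}) \arr \arrow{\cateb{Sets}}$ sending a family $(C_i)_{i\in I}$ to the projection $\sum_{i\in I} V(C_i) \arr I$ and a morphism $(u, (f_i)_i)$ to the evident commuting square. One verifies that $U$ is fibered over $\cateb{Sets}$ (the image of a $\textbf{proj}$-cartesian lift becomes a pullback square, since $\sum_{j\in J} V(C_{u(j)}) \cong J \times_I \sum_{i\in I} V(C_i)$), that each $UX$ is small (automatic in $\cateb{Sets}$), and that $U$ is faithful, because the top edge of the square pointwise determines each $Vf_i$ and $V$ is faithful. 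Conversely, $(\text{iii}) \Rightarrow (\text{i})$ is obtained by restricting the faithful fibered $U$ to the fiber over a one-element set $\{*\}$, using $\textup{Fam}(\cateb{C})_{\{*\}} \simeq \cateb{C}$ and $(\arrow{\cateb{Sets}})_{\{*\}} \simeq \cateb{Sets}$.

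For $(\text{ii}) \Rightarrow (\text{iv})$, given choice sets $\Sigma_{A',B'}$ in $\cateb{C}$, to two objects $A = (A_i)_{i \in I}$, $B = (B_i)_{i \in I}$ over $I$ associate $\pi: J \arr I$ with $J := \sum_{i \in I} \Sigma_{A_i, B_i}$ and $\pi(i,\sigma) = i$, let $R$ be the $J$-indexed family whose $(i,\sigma)$-component is the middle object of $\sigma$, and take $\pi_A, \pi_B$ to be assembled from the span components. Given an $(A,B)$-span $(f, X, g)$ in $\textbf{proj}$ over $u: K \arr I$, the unique function $\ov{u}: K \arr J$ is forced to pick, at each $k \in K$, the index $\ov{u}(k) = (u(k), \sigma_k) \in \Sigma_{A_{u(k)}, B_{u(k)}}$ of the span $\sim$-equivalent to $(f_k, X_k, g_k)$ in $\cateb{C}$; the required cartesian $\theta$ is then the chosen lift $\ov{u}^*R \arr R$. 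Conversely, $(\text{iv}) \Rightarrow (\text{ii})$ is obtained by specializing the Isbell condition for $\textbf{proj}$ to $A, B$ viewed as families indexed by $\{*\}$: any $(A,B)$-cospan in $\textbf{proj}$ reduces, via its single basepoint in its indexing set, to an ordinary $(A,B)$-cospan in $\cateb{C}$, so the choice span becomes a set $J$ of spans in $\cateb{C}$ covering all $\sim$-equivalence classes, which serves as $\Sigma_{A,B}$.

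The main technical obstacle lies in $(\text{ii}) \Leftrightarrow (\text{iv})$, namely in correctly translating the fibered relation $\sim_P$ into a family of pointwise $\sim$-conditions in $\cateb{C}$ indexed by the base, and in verifying that the $P$-cartesianness clause of Definition~\ref{concretefibration} encodes precisely the pointwise choice of span representatives on each fiber. Once this unpacking is done, the remaining verifications are essentially bookkeeping over the indexing sets.
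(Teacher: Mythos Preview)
Your proposal is correct and follows essentially the same route as the paper: (i)$\Leftrightarrow$(ii) by citing Theorem~\ref{teo}, (i)$\Leftrightarrow$(iii) by extending/restricting a faithful functor along the fiber over a singleton, (ii)$\Rightarrow$(iv) by assembling the choice span over $\bigsqcup_{i\in I}\Sigma_{A_i,B_i}$ and defining $\ov u$ pointwise, and (iv)$\Rightarrow$(ii) by specializing to $1$-indexed families. Your (i)$\Rightarrow$(iii) is in fact more explicit than the paper's one-line remark, spelling out the functor $U$ via $\sum_{i\in I}V(C_i)\to I$ and checking cartesianness; otherwise the arguments coincide.
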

\begin{proof}
(i)$\Leftrightarrow$(ii): see theorem~\ref{teo}. (i)$\Leftrightarrow$(iii):
every faithful functor $\cateb{C}\arr\cateb{Sets}$ extends to a faithful fibered functor as required in (iii) and, viceversa, every faithful fibered functor as in (iii) restricts to a 
faithful functor as in (i).
(ii)$\Arr$(iv): let $I$ be a set and 
$A\doteq(A_i)_{i\in I}$,
$B\doteq(B_i)_{i\in I}$ be $I$-indexed families of objects of
\cateb{C}. Since
\cateb{C} satisfies the Isbell condition,
for every $i\in I$ there exists a
choice set $\Sigma_{A_i,B_i}$ for the $(A_i,B_i)$-spans in
\cateb{C}. Let its elements be indicated as 
$(\pi,R_{(\pi,\pi')}^i,\pi')$. Put $\Sigma_{A,B}\doteq\bigsqcup_{i\in
  I}\Sigma_{A_i,B_i}$ and let $p:\Sigma_{A,B}\arr I$ be the evident
projection. We claim that the $(A,B)$-span
\[\xymatrix@R=3ex{&(A_i)_{i\in I}\\
(R_{(\pi,\pi')}^i)_{(i,(\pi,\pi'))\in\Sigma_{A,B}}
\ar@/^/[ur]^{(p,\pi_A)}\ar@/_/[dr]_{(p,\pi_B)}\\
&(B_i)_{i\in I}}\]
with $\pi_A=(\pi:R_{(\pi,\pi')}^i\arr
A_i)_{(i,(\pi,\pi'))\in\Sigma_{A,B}}$,
$\pi_B=(\pi':R_{(\pi,\pi')}^i\arr
B_i)_{(i,(\pi,\pi'))\in\Sigma_{A,B}}$ is a choice span 
for \catebf{proj}. Indeed,
for every $(A,B)$-span $((u,f),(X_j)_{j\in J},(u,g))$ with $u:J\arr I$, $f=(f_j:X_j\arr
A_{u(j)})_{j\in J}$, $g=(X_j\arr B_{u(j)})_{j\in J}$,
let $\ov{u}:J\arr\Sigma_{A,B}$ be the function  $\ov{u}(j)\doteq(u(j),(\pi_{f_j},R_{(\pi_{f_j},\pi'_{g_j})}^{u(j)},\pi'_{g_j}))$, with
$(\pi_{f_j},R_{(\pi_{f_j},\pi'_{g_j})}^{u(j)},\pi'_{g_j})\in\Sigma_{A_{u(j)},B_{u(j)}}$ 
the unique span equivalent to $(f_j,X_j,g_j)$ in \cateb{C}.
Put 
\[\xymatrix{\theta\doteq(\ov{u},id):(R_{(\pi_{f_j},\pi'_{g_j})}^{u(j)})_{j\in
  J}\ar[r]&(R_{(\pi,\pi')}^i)_{(i,(\pi,\pi'))\in\Sigma_{A,B}}}\]
and observe that for every
$(A,B)$-cospan $((v,h),(Z_l)_{l\in L},(v,k))$, with $v:I\arr L$,
$h=(A_i\arr Z_{v(i)})_{i\in I}$, $k=(B_i\arr Z_{v(i)})_{i\in I}$,
and for every $j\in J$,
$h_{u(j)}\circ f_j=k_{u(j)}\circ g_j$ if and only if
$h_{u(j)}\circ\pi_{f_j}=k_{u(j)}\circ\pi_{g_j}$, since 
\cateb{C} satisfies the Isbell condition, 
and that $\theta$ is unique by construction. (iv)\Arr(i): \cateb{C} satisfies the Isbell condition because \textbf{proj} satisfies the Isbell condition with respect to
$1$-indexed families of objects of \cateb{C} in particular, with $1$ a terminal
object in \cateb{Sets}.
\end{proof}

\section{Concreteness of small fibrations}
The notion of internal category makes sense with respect to an ambient category with pullbacks. Unless otherwise explicitly stated, we will henceforth assume such a minimal requirement. We do not fully recall what an internal category is. We describe it as a $6$-tuple $\catebf{C}=(C_0,C_1,d_0,d_1,c,i)$ where
$d_0,d_1:C_1\arr C_0$, $c:C_1\times_{C_0}C_1\arr C_1$, $i:C_0\arr C_1$ fit in suitable commutative diagrams, with
\[\xymatrix{C_1\times_{C_0}C_1\ar[d]_{\pi_1}\ar[r]^(.6){\pi_2}&C_1\ar[d]^{d_1}\\
C_1\ar[r]_{d_0}&C_0}\]
a pullback. A full definition can be found
in~\cite{MR1674451},~\cite{MR0470019}. We will refer to an internal category in \cateb{Sets} as to a \emph{small category}.

\begin{definition}
Let $\catebf{C}=(C_0,C_1,d_0,d_1,c,i)$ 
be an internal category in a category \cateb{B}. An \emph{internal diagram} on \catebf{C} is a pair $(p:F\arr C_0,q:C_1\times_{C_0}F\arr F)$ where 
\[\xymatrix{C_1\times_{C_0}F\ar[d]_{\pi_1}\ar[r]^(.6){\pi_2}&F\ar[d]^p\\
C_1\ar[r]_{d_0}&C_0}\]
is a pullback, and such that the following identities hold:
\begin{itemize}
\item[-] $p\circ q=d_1\circ\pi_1$.
\item[-] $q\circ<ip,id_F>=id_F$.
\item[-] $q\circ(C_1\times_{C_0} q)=q\circ(c\times_{C_0} F)$.
\end{itemize}
Internal diagrams on \catebf{C} will be also denoted $(p,q):\catebf{C}\arr\cateb{B}$.  
\end{definition}
\begin{notation}
Following~\cite{MR1300636}, where
internal diagrams are referred to as category actions, for every pair of morphisms $(f:I\arr C_1,a:I\arr F)$ we write $f\cdot a$ for the composite $q\circ<f,a>$.
\end{notation}

\begin{definition}
Let \catebf{C} be an internal category in a category \cateb{B}. 
An internal diagram $(p,q):\catebf{C}\arr\cateb{B}$ is \emph{faithful} if for every $f,g:I\arr C_1$, with $d_0f=d_0g$, $d_1f=d_1g$, and for every $a:I\arr F$ with $pa=d_0f$, if $f\cdot a=g\cdot a$, then $f=g$. 
\end{definition}

\begin{remark}
Internal categories with a faithful internal diagram may be referred to as concrete internal category but every internal category \catebf{C} is concrete in this sense, since 
it can be verified that a faithful internal diagram on it is $(d_1:C_1\arr C_0,c:C_1\times_{C_0}C_1\arr C_1)$. This is nothing but the internalization of the well known result that every small category has a faithful functor to \cateb{Sets}, see~\cite{MR0013131}.
\end{remark}
Now, let \catebf{C} be an internal category in \cateb{B}
and $\textup{Fam}(\catebf{C})$ be the category
identified by the following data:
\begin{description}
\item[objects:] are pairs $(I,X)$ where $I$ is an object of
  \cateb{B} and $X:I\arr C_0$ is in \cateb{B}.
\item[morphisms:] are pairs $(u,f):(I,X)\arr (J,Y)$ where $u:I\arr
  J$ and $f:I\arr C_1$ in \cateb{B}, fitting together in the
commutative diagram
\[\xymatrix{&I\ar[d]^f\ar[dl]_X\ar[r]^u&J\ar[d]^Y\\
C_0&C_1\ar[r]_{d_1}\ar[l]^{d_0}&C_0}\]
\item[composition:] is given by the rule
\[\xymatrix{(I,X)\ar@/_1pc/[rr]_{(v\circ u,\,c\circ<g\circ u,f>)}
\ar[r]^{(u,f)}&(J,Y)\ar[r]^{(v,g)}&(K,Z)}\]
\item[identity:] the identity morphism at $(I,X)$ say, is
  $(id_I,i\circ X)$.
\end{description}
For every object $(I,X)$ of
$\textup{Fam}(\catebf{C})$, the assignment $(I,X)\mapsto I$ extends to
a functor $\textbf{proj}:\textup{Fam}(\catebf{C})\arr\cateb{B}$ which
is a fibration referred to as the \emph{externalization} of
\catebf{C}. For every morphism $u:I\arr J$ and object
$(J,Y)$, a \textbf{proj}-cartesian lifting of $(J,Y)$ along $u$ can be
taken as
\[\xymatrix@C=10ex{(I,Y\circ u)\ar[r]^(.5){(u,\,i\circ Y\circ u)}&(J,Y)}\]
A \emph{small fibration} is one which is essentially the externalization of some
internal category in its base.\\

Internal diagrams on an internal category \catebf{C} in a category \cateb{B} correspond to fibered functors from the externalization of \catebf{C} to the fundamental fibration over \cateb{B}, see~\cite{MR1674451}.
\begin{proposition}\label{faith}
Let \catebf{C} be an internal category in a category \cateb{B}. Faithful internal internal diagrams on \catebf{C} give rise to faithful fibered functors from the externalization of \catebf{C} to the fundamental fibration over \cateb{B}.
\end{proposition}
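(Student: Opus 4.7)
The plan is to first unfold the correspondence between internal diagrams and fibered functors (recalled just above the statement) into an explicit formula for the fibered functor $U : \textup{Fam}(\catebf{C}) \arr \catebf{cod}$ associated with a given internal diagram $(p, q)$. On objects, $U$ should send $(I, X)$ to the pullback projection $\pi_1^X : X^*F \arr I$ of $p : F \arr C_0$ along $X : I \arr C_0$; on a morphism $(u, f) : (I, X) \arr (J, Y)$, $U$ returns the commuting square over $u$ whose top arrow $|f| : X^*F \arr Y^*F$ is the unique pullback-induced morphism satisfying $\pi_1^Y\circ|f| = u\circ\pi_1^X$ and $\pi_2^Y\circ|f| = q\circ\langle f\pi_1^X, \pi_2^X\rangle$. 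The latter pair factors through $C_1\times_{C_0}F$ because $d_0\circ f = X$ together with the defining equality $X\circ\pi_1^X = p\circ\pi_2^X$ of the pullback $X^*F$.

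For faithfulness, I would start from an assumed equality $U(u_1, f_1) = U(u_2, f_2)$ and exploit the square componentwise: the bottom gives $u_1 = u_2 =: u$ immediately, and equality of the top arrows, postcomposed with $\pi_2^Y$, yields
\[
q\circ\langle f_1\pi_1^X, \pi_2^X\rangle \;=\; q\circ\langle f_2\pi_1^X, \pi_2^X\rangle
\]
as morphisms $X^*F \arr F$. Since $d_0 f_i = X$ and $d_1 f_i = Y\circ u$ for both $i$, the parallel pair $f_1, f_2 : I \arr C_1$ satisfies the endpoint conditions required by the faithfulness hypothesis, so the remaining task is to derive $f_1\cdot a = f_2\cdot a$ for every $a : I \arr F$ with $p\circ a = X$.

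Given such an $a$, the pullback property of $X^*F$ supplies a unique $\alpha = \langle\textup{id}_I, a\rangle : I \arr X^*F$ with $\pi_1^X\circ\alpha = \textup{id}_I$ and $\pi_2^X\circ\alpha = a$. Precomposing the displayed identity with $\alpha$ collapses it to $q\circ\langle f_1, a\rangle = q\circ\langle f_2, a\rangle$, that is, $f_1\cdot a = f_2\cdot a$; faithfulness of $(p, q)$ then forces $f_1 = f_2$, completing the argument.

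The only real care required is keeping straight the three pullbacks involved ($X^*F$, $Y^*F$, and $C_1\times_{C_0}F$) and invoking the fact that $U$ lies over $\cateb{B}$ to recover $u_1 = u_2$ for free; I do not foresee any deeper obstacle, the crux being merely the section $\alpha$ produced by the pullback universal property that transports the generic-element equality into a pointwise one.
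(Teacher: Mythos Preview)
Your argument is correct and is precisely the unfolding that the paper leaves implicit: the paper's own proof reads in its entirety ``Straightforward.'' Your explicit description of $U$ on objects and morphisms, together with the section $\alpha=\langle \mathrm{id}_I,a\rangle$ that converts the equality of top arrows into the pointwise identity $f_1\cdot a=f_2\cdot a$, is exactly the verification one would expect, so there is nothing to compare beyond the level of detail.
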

\begin{proof}
Straightforward.
\end{proof}

\begin{proposition}\label{con}
Every small fibration over a base category with finite limits satisfies the Isbell condition.
\end{proposition}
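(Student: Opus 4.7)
The plan is to reduce to the case where $P$ is literally the externalization $\textbf{proj}:\textup{Fam}(\catebf{C})\arr\cateb{B}$ of some internal category $\catebf{C}=(C_0,C_1,d_0,d_1,c,i)$ in $\cateb{B}$ (this reduction is legitimate because satisfaction of the Isbell condition transports along equivalences of fibrations), and then to exhibit a choice span for each pair $A=(I,X)$, $B=(I,Y)$ of objects in the same fibre by means of a finite-limits construction. Since no explicit notion of smallness is in force on $\cateb{B}$, the small-morphism requirement on $P\pi_A$ is automatic by the convention fixed after the definition of category with small morphisms, so the entire content of the proposition reduces to the universal property.

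The concrete construction I would use is the pullback
\[\xymatrix{\Sigma_{A,B}\ar[d]_{\pi}\ar[r]&C_1\times_{C_0}C_1\ar[d]^{(d_1\pi_1,\,d_1\pi_2)}\\
I\ar[r]_(.4){(X,Y)}&C_0\times C_0}\]
where $C_1\times_{C_0}C_1$ is the pullback of $d_0$ along $d_0$. This provides $\pi:\Sigma_{A,B}\arr I$ together with $\rho_1,\rho_2:\Sigma_{A,B}\arr C_1$ and a common source $R_0\doteq d_0\rho_1=d_0\rho_2:\Sigma_{A,B}\arr C_0$. I would then set $R\doteq(\Sigma_{A,B},R_0)$, $\pi_A\doteq(\pi,\rho_1):R\arr A$, $\pi_B\doteq(\pi,\rho_2):R\arr B$, obtaining a $\textbf{proj}$-span in $P$ with $P\pi_A=P\pi_B=\pi$.

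Given an arbitrary $(A,B)$-span $(f,(J,W),g)$ with $f=(u,\varphi)$, $g=(u,\psi)$ (so $d_0\varphi=d_0\psi=W$, $d_1\varphi=Xu$, $d_1\psi=Yu$), the universal property of the pullback furnishes a unique $\ov{u}:J\arr\Sigma_{A,B}$ with $\pi\ov{u}=u$, $\rho_1\ov{u}=\varphi$, $\rho_2\ov{u}=\psi$ and $R_0\ov{u}=W$. Taking $\theta\doteq(\ov{u},i\circ W):(J,W)\arr R$, which is the canonical $\textbf{proj}$-cartesian lifting of $R$ along $\ov{u}$, and using the composition rule of $\textup{Fam}(\catebf{C})$ together with the right unit law $c\circ\langle\varphi,i\circ d_0\varphi\rangle=\varphi$ of the internal category, one computes $\pi_A\theta=(u,\varphi)=f$ and symmetrically $\pi_B\theta=g$. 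Thus $\theta$ is a mediating $\textbf{proj}$-cartesian morphism, the spans are literally equal, and so in particular $P$-equivalent; this places us in the situation of remark~\ref{aquesto}, where the remaining part of the universal property comes for free.

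Uniqueness of $\theta$ I would split in two: the base component $\ov{u}$ is forced by the pullback description of $\Sigma_{A,B}$, and cartesianness pins down the fibre component, so any other $\textbf{proj}$-cartesian $\theta'$ inducing a $P$-equivalent span must coincide with $\theta$. I do not anticipate any real obstacle; the main thing to get right is the bookkeeping of the two nested pullbacks defining $\Sigma_{A,B}$, and the invocation of the internal unit axiom at the step identifying $\pi_A\theta$ with $f$.
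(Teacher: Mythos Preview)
Your proposal is correct and is essentially the paper's own proof: the paper constructs the same object $\Sigma_{A,B}$ via two stacked pullbacks (first forming $S\cong C_1\times_{C_0}C_1$ as the pullback of $\Delta_0$ along $d_0\times d_0$, then pulling $(d_1\times d_1)\circ\langle s_1,s_2\rangle$ back along $\langle A,B\rangle$), exhibits the same vertical span $(id,s_1h)$, $(id,s_2h)$, and verifies the mediating universal property of remark~\ref{aquesto} via exactly the pullback-induced $\ov{u}$ you describe. Your explicit invocation of the internal unit law to identify $\pi_A\theta$ with $f$ is in fact slightly more detailed than the paper's ``it can be seen that the diagram commutes''.
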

\begin{proof}
Let $\catebf{C}=(C_0,C_1,d_0,d_1,c,i)$ be an internal category in a
category \cateb{B} with finite limits and
$\catebf{proj}:\textup{Fam}(\catebf{C})\arr\cateb{B}$ be its externalization.
For every object $I$ in \cateb{B} and $I$-indexed families $A,B:I\arr
C_0$, construct the diagram
\begin{eqnarray}\label{tutto}
\xymatrix{\Sigma_{A,B}\ar[dd]_{\pi}\ar[r]^h&S\ar[d]^{<s_1,s_2>}\ar[r]^{\sigma}&C_0
\ar[d]^{\Delta_0}\\
&C_1\times C_1\ar[d]^{d_1\times d_1}\ar[r]_{d_0\times d_0}&C_0\times C_0\\
I\ar[r]_(.4){<A,B>}&C_0\times C_0}
\end{eqnarray}
in which all the quadrilaterals are pullbacks. We claim that the $\catebf{proj}$-vertical
$(\pi^*(I,A),\pi^*(I,B))$-span that fits in diagram
\[\xymatrix@C=10ex@R=4ex{(\Sigma_{A,B},A\circ\pi)\ar[r]^(.6){(\pi,i\circ A\circ\pi)}&(I,A)\\
(\Sigma_{A,B},\sigma\circ h)\ar[u]^{(id,s_1\circ
    h)}\ar[d]_{(id,s_2\circ h)}\\
(\Sigma_{A,B},B\circ\pi)\ar[r]_(.6){(\pi,i\circ B\circ\pi)}&(I,B)\\
\Sigma_{A,B}\ar[r]_{\pi}&I}\]
is universal as described in remark~\ref{aquesto}.
Indeed, for every morphism $u:J\arr I$ and for every $\catebf{proj}$-vertical
$(u^*(I,A),u^*(I,B))$-span
\[\xymatrix{(J,A\circ
  u)&(J,X)\ar[r]\ar[l]_(.4){(id,f)}\ar[r]^{(id,g)}&(J,B\circ u)}\]
there exists a unique morphism $\ti{u}:J\arr
S$ with $\sigma\circ \ti{u}=X$ and $<s_1,s_2>\circ \ti{u}=<f,g>$. Thus,
in turn there exists a unique morphism $\ov{u}:J\arr\Sigma_{A,B}$
with $\pi\circ\ov{u}=u$ and $h\circ\ov{u}=\ti{u}$.
It can be seen that diagram
\[\xymatrix@C=12ex{(J,A\circ u)
\ar@{-->}[r]^(.45){(\ov{u},i\circ A\circ \pi\circ\ov{u})}&(\Sigma_{A,B},A\circ\pi)\\
(J,X)\ar[u]^{(id,f)}\ar[d]_{(id,g)}\ar[r]^(.45){(\ov{u},i\circ\sigma\circ
h\circ\ov{u})}&(\Sigma_{A,B},\sigma\circ h)\ar[d]^{(id,s_2\circ h)}\ar[u]_{(id,s_1\circ h)}\\
(J,B\circ u)\ar@{-->}[r]_(.45){(\ov{u},i\circ B\circ\pi\circ\ov{u})}&(\Sigma_{A,B},B\circ \pi)\\
J\ar[r]_{\ov{u}}&\Sigma_{A,B}}\]
commutes over $\ov{u}$.
\end{proof}
\begin{remark}
We observe that proposition~\ref{con} can be restated with respect to a base category with small morphisms by further requiring every diagonal morphism to be small, as well as all the structure morphisms of an internal category in it.
\end{remark}
Recall from~\cite{MR0268247},~\cite{MR1173016} the notion of \emph{pullback of a parallel pair} of morphisms in a category with pullbacks: for every morphisms $h:T\arr A$ and $f,g:X\arr A$, the pullback of the pair $(f,g)$ along $h$ 
can be constructed as shown in diagram
\[\xymatrix@R=.5ex{&&P\ar[dr]\ar[dl]\\
&\b\ar[dr]\ar[dl]&&\b\ar[dr]\ar[dl]\\
T\ar[dr]_h&&X\ar[dr]_g\ar[dl]^f&&T\ar[dl]^h\\
&A&&A}\]
in which all the quadrilaterals are pullbacks.

\begin{proposition}
Let $\catebf{C}=(C_0, C_1, d_0,d_1, c, i)$ be an internal category in a category \cateb{B} with finite limits. The externalization of \catebf{C} is a concrete fibration. 
\end{proposition}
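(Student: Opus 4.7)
The plan is to apply two earlier results in succession. First, the remark immediately preceding Proposition \ref{faith} exhibits $(d_1: C_1 \arr C_0,\, c: C_1 \times_{C_0} C_1 \arr C_1)$ as a faithful internal diagram on \catebf{C}. Then Proposition \ref{faith} converts any faithful internal diagram into a faithful fibered functor from the externalization of \catebf{C} to the fundamental fibration over \cateb{B}, so I would take $U: \catebf{proj} \arr \catebf{cod}$ to be the fibered functor corresponding to $(d_1, c)$.

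Unwinding the correspondence, $U$ sends an object $(I, X: I \arr C_0)$ of $\textup{Fam}(\catebf{C})$ to the pullback projection $X^* C_1 \arr I$ obtained by pulling $d_1$ back along $X$, and sends a morphism $(u, f): (I, X) \arr (J, Y)$ to the commuting square whose top edge is induced by composition with $f$ via the action $c$. Faithfulness of $U$ is then the direct output of Proposition \ref{faith} applied to $(d_1, c)$.

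The only other clause in the definition of concrete fibration to verify is that each component $U(I, X): X^* C_1 \arr I$ is a small morphism of \cateb{B}. Since the statement only requires \cateb{B} to have finite limits and does not fix any class of small morphisms, the convention stated right after the definition of category with small morphisms applies, and every morphism of \cateb{B} is considered small. Hence this clause is satisfied vacuously, and $(\catebf{proj}, U)$ is a concrete fibration over \cateb{B}.

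I do not anticipate any real obstacle: both the faithful internal diagram and its translation to a faithful fibered functor are already in hand, and the smallness requirement collapses to nothing under the adopted convention. The mildly subtle point, worth flagging explicitly in the write-up, is precisely this invocation of the all-morphisms-are-small convention, which is what lets us relax the "category with small morphisms" hypothesis of the definition of concrete fibration to the "category with finite limits" hypothesis of the proposition.
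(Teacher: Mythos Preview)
Your argument is correct. Invoking the already--established faithful internal diagram $(d_1,c)$ together with Proposition~\ref{faith}, and then discharging the smallness clause via the paper's convention that in the absence of an explicit class every morphism is small, does yield a concrete fibration structure on $\catebf{proj}$.

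The paper, however, takes a different route: rather than using the ``representable'' internal diagram $(d_1,c)$, it constructs a new faithful internal diagram $(p,q)$ whose underlying object $F$ is built as the pullback of the parallel pair $(d_0\times d_0,\,d_1\times d_1)$ along $\Delta_0$, and then checks directly that this $(p,q)$ is faithful (using a monic section $\veps:C_1\arr F$). The point of this more elaborate construction is that, for an object $(I,X)$, the resulting morphism $U(I,X)\arr I$ is precisely the Isbell choice object $\Sigma_{X,X}\arr I$ produced in Proposition~\ref{con}. So the paper is not merely exhibiting \emph{some} faithful fibered functor to $\catebf{cod}$, but showing that the specific assignment $(I,X)\mapsto(\Sigma_{X,X}\arr I)$ coming from the Isbell condition is itself faithful---thereby tightening the link between the Isbell condition and concreteness in the fibered setting and fulfilling the promise of the earlier (unnumbered) proposition about that assignment. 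Your approach is shorter and perfectly valid for the bare statement; the paper's approach carries more thematic weight.
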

\begin{proof}
We prove the thesis by showing that 
for every object $(I,X)$, the assignment 
$(I,X)\mapsto\Sigma_{X,X}\arr I$
extends to a faithful fibered functor as required. In view of proposition~\ref{faith} we do this by describing the faithful internal diagram that gives rise to the wanted faithful fibered functor. Construct the pullback of the parallel pair $(d_0\times d_0,d_1\times d_1)$ against $\Delta_0:C_0\arr C_0\times C_0$, like this:
\begin{eqnarray}\label{first}
\xymatrix@R=.5ex{&&F\ar[dr]^{p_1}\ar[dl]_{p_2}\\
&S\ar[dr]|(.4){<s_1,s_2>}\ar[dl]_{\sigma}&&T\ar[dr]^{\tau}\ar[dl]|(.4){<t_1,t_2>}\\
C_0\ar[dr]_(.4){\Delta_0}&&C_1\times C_1\ar[dr]_(.4){d_1\times d_1}\ar[dl]^(.4){d_0\times d_0}&&C_0\ar[dl]^(.4){\Delta_0}\\
&C_0\times C_0&&C_0\times C_0}
\end{eqnarray}
Let the first component of the wanted internal diagram be $p\doteq\tau\circ p_1$. Construct the pullback $(\pi_1,C_1\times_{C_0}F,\pi_2)$ of $p$ against $d_0$. Construct the morphism $\veps:C_1\arr F$, uniquely induced in diagram~(\ref{first}) by the triple $(d_1,\Delta_1:C_1\arr C_1\times C_1,d_0)$ and observe that it is monic. Construct the pullback $(q_1,F\times_{C_0}F,q_2)$ of $p$ against $\sigma\circ p_2$ and let $\mu:C_1\times_{C_0}F\arr F\times_{C_0}F$ be the uniquely induced morphism in it, determined by the pair $(\veps\pi_1,\pi_2)$. Construct the pullback $(\pi_1\times\pi_1,(C_1\times_{C_0}C_1)\times(C_1\times_{C_0}C_1),\pi_2\times \pi_2)$ of $d_1\times d_1$ against $d_0\times d_0$ and let $\gamma:F\times_{C_0}F\arr(C_1\times_{C_0}C_1)\times(C_1\times_{C_0}C_1)$ be the uniquely induced morphism determined by the pair $(<s_1,s_2>p_2q_1,<t_1,t_2>p_1q_2)$. Construct the second component $q:C_1\times_{C_0}F\arr F$ of the wanted internal diagram as the uniquely induced morphism in~(\ref{first}) determined by the triple $(d_1\pi_1,(c\times c)\gamma\mu,\sigma p_2\pi_2)$. Thus, the identity $pq=d_1\pi_1$ holds by construction, and the remaining identities 
are easily checked to hold, so that $(p,q):\catebf{C}\arr\cateb{B}$ and it is faithful. Indeed, let $f,g:I\arr C_1$ with $d_0f=d_0g$ and $d_1f=d_1g$ be such that for every $a:I\arr F$, $f\cdot a=g\cdot a$. Thus, in particular, it can be easily checked that $\veps f=f\cdot(\veps i\circ d_0f)=g\cdot(\veps i\circ d_0g)=\veps g$, and use the fact that $\veps$ is monic.
\end{proof}

\bibliographystyle{plain}
\bibliography{BiblioTeX}
\end{document}